\newcommand{\ignore}[1]{}
\newcommand{\hide}[1]{}
\newtheorem{dummy}{Dummy}
\newtheorem{lemma}[dummy]{Lemma}
\newtheorem{theorem}[dummy]{Theorem}
\newtheorem{prop}[dummy]{Proposition}
\theoremstyle{definition}
\newtheorem*{question}{Question}
\theoremstyle{remark}
\newtheorem*{rem*}{Remark to ourselves}
\begin{document}

\bibliographystyle{amsalpha}

\title[On the nilpotency class of some finite groups]{On the nilpotency class of finite groups with a Frobenius group of automorphisms}
 
\author{Valentina Iusa}
\address{School of Mathematics and Physics \\
University of Lincoln \\
Brayford Pool
Lincoln, LN6 7TS\\
United Kingdom}
\email{vIusa@lincoln.ac.uk}

 
\date{June 11, 2018 
}
 
\keywords{Frobenius group; automorphism; Lie algebra; finite group; nilpotency class}

\begin{abstract}
Suppose that a metacyclic Frobenius group $FH$, with kernel $F$ and complement $H$, acts by automorphisms on a finite group $G$, in such a way that $C_G(F)$ is trivial and $C_G(H)$ is nilpotent. It is known that $G$ is nilpotent and its nilpotency class can be bounded in terms of $|H|$ and the nilpotency class of $C_G(H)$. Until now, it was not clear whether the bound could be made independent of the order of $H$. In this article, we construct a family $\mathfrak{G}$ of finite nilpotent groups, of unbounded nilpotency class. Each group in $\mathfrak{G}$ admits a metacyclic Frobenius group of automorphisms such that the centralizer of the kernel is trivial and the centralizer of the complement is abelian. This shows that the dependence of the bound on the order of $H$ is essential.

\end{abstract}

\maketitle

\section{Introduction}
Let $A$ be a group acting on a group $G$. We denote by $C_G(A)$ the centralizer of $A$ in $G$, namely the fixed-point subgroup of $A$. Experience shows that in many cases the properties of $G$ are influenced by those of $C_G(A)$. After Mazurov's problem $17.72$ in Kourovka Notebook \cite{mazurov2014unsolved}, special attention was given to the case where a Frobenius group acts by automorphisms on a finite group.

We recall that a finite Frobenius group $FH$, with kernel $F$ and complement $H$, can be characterized as a semidirect product of a normal subgroup $F$ by $H$ such that $C_F(h)=1$ for every non trivial element $h$ of $H$. The structure of Frobenius groups is well known. In particular, by Thompson's theorem \cite{thompson1959finite} the kernel is nilpotent and, by Higman's theorem \cite{Higman01071957}, its nilpotency class is bounded in terms of the least prime divisor of the order of $H$. The explicit upper bound is due to Kreknin and Kostrikin (see \cite{Kreknin,Kreknin-Kostrikin}).

Suppose that a Frobenius group $FH$ acts on a finite group $G$ in such a way that $C_G(F)$ is trivial. By Belyaev and Hartley's theorem \cite{Belyaev1996}, the group $G$ is soluble. Moreover, by Khukhro--Makarenko--Shumyatsky's theorem \cite[Theorem~2.7]{khukhro2014frobenius} if $C_G(H)$ is nilpotent, then $G$ is nilpotent. In the same article, the three authors proved that if in addition $F$ is cyclic, then the nilpotency class of $G$ can be bounded in terms of the order of $H$ and the nilpotency class of $C_G(H)$. We remark that this result can be seen as part of the more general goal of expressing the properties of $G$ in terms of the corresponding properties of $C_G(H)$, possibly depending also on the order of $H$ (see also \cite{makarenko2010frobenius,SHUMYATSKY2011482,Khukhro2011,KHUKHRO20121}).

For bounding the nilpotency class of the group $G$, Lie ring methods are used. Indeed, the proof is based on the analogous results for Lie rings and algebras which are also important in their own right. The advantage of this method lies in the fact that it is usually easier to deal with Lie rings as they are more linear objects. On the other hand, both steps, from the group to the Lie ring and back, may be quite non-trivial.

Until now, the question whether the bound for the nilpotency class of $G$ could be made independent of the order of the Frobenius complement $H$ remained unsolved. Indeed, there were no explicit examples showing the opposite. In this paper, we shall give a negative answer to this question by proving the following

\begin{theorem}\label{thm1}
There exists a family $\mathfrak{G}$ of finite nilpotent groups, of unbounded nilpotency class, whose members $G$ satisfy the conditions:
\begin{enumerate}
\item $G$ admits a metacyclic Frobenius group of automorphisms; 
\item the centralizer of the kernel in $G$ is trivial;
\item the centralizer of the complement in $G$ is abelian. 
\end{enumerate}
\end{theorem}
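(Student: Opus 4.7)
The natural strategy, given the Lie-ring techniques mentioned in the introduction, is to build the examples first as nilpotent Lie rings equipped with a suitable $FH$-action and then transfer to finite nilpotent groups via a Lazard-type correspondence. Because the nilpotency class of $G$ must grow in the family while $|H|$ is allowed to grow with it, I would parametrise the construction by a prime $p$, taking $H$ cyclic of order $p$ and $F$ cyclic of a prime order $q$ with $p \mid q-1$, so that $FH$ is metacyclic Frobenius. Coefficients should be taken in $\Z/p^k$ (or $\F_p$), coprime to $q$ so that the $F$-action remains semisimple, and appropriate for Lazard.

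The condition $C_G(F)=1$ translates on the Lie side to the absence of a trivial $F$-summand in $L$, or equivalently, after extending scalars to a splitting field, to a $\Z/q$-grading $L=\bigoplus_{i\in\Z/q}L_i$ with $L_0=0$. The complement $H$ then permutes the nonzero weight components in orbits of length $p$. I would start from a carefully chosen $FH$-module $V$ of generators whose weights form a single $H$-orbit (so that $H$ cyclically permutes the weight spaces), and form a graded quotient of the free nilpotent Lie ring on $V$. Truncating at class $c$ gives a Lie ring whose $FH$-action is automatic from the grading, and whose nilpotency class is at most $c$.

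The heart of the construction, and what I expect to be the main obstacle, is to force $C_L(H)$ to be abelian without collapsing the nilpotency class. The idea is to impose, in addition to the truncation, the minimal set of homogeneous relations killing every bracket of two $H$-invariant elements (i.e.\ every $H$-symmetric combination of brackets of weight vectors). Because these relations are homogeneous with respect to the $\Z/q$-grading, they preserve $L_0=0$. The nontrivial point is to verify that the class does not collapse below $c$: a weight-counting argument exploiting the grading should exhibit an explicit non-zero iterated commutator of length $c$ surviving the quotient, since relations supported in one set of weights need not interact with commutators landing in disjoint weights. This is the delicate step and, in my view, the technical heart of the paper.

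Finally, once a family of such Lie rings of unbounded class $c$ (with $c<p$, so that Lazard applies) is in hand, the Lazard correspondence yields a family of finite $p$-groups $G$ of the same class. Because Lazard is an equivalence preserving the automorphism group and centralisers of automorphism actions, the $FH$-action on $L$ lifts to an action on $G$ with $C_G(F)=1$ and $C_G(H)$ abelian, producing the desired family $\mathfrak{G}$. Since $p$ may be chosen arbitrarily large (forcing both $|H|$ and $c$ to grow), the resulting examples have unbounded nilpotency class, confirming that any bound in the Khukhro--Makarenko--Shumyatsky theorem must depend on $|H|$.
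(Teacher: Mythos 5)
Your overall strategy coincides with the paper's: construct graded nilpotent Lie algebras carrying the Frobenius action, kill the obstructions to conditions (2) and (3) by passing to a quotient, and transfer to finite $q$-groups by the Lazard correspondence (the paper takes kernel $C_p$, complement $C_{p-1}$, and characteristic $q_p>p$ so that Shumyatsky's bound $c\le p-1$ makes Lazard applicable). But the proposal has a genuine gap exactly where you flag ``the technical heart'': you assert, without proof, that the nilpotency class survives the quotient, and the heuristic you offer for why (``relations supported in one set of weights need not interact with commutators landing in disjoint weights'') is not how the argument can go. The relations you must impose are not the brackets of $H$-fixed elements alone but the smallest $FH$-invariant ideal containing $[C_L(H),C_L(H)]$; since $C_L(H)$ is not $F$-homogeneous, this ideal is strictly larger and its generators, once decomposed into weight components and closed under the ideal and group operations, spread across \emph{all} weights and all degrees. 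The same happens with the ideal generated by the zero-weight component (which, contrary to your setup, cannot be avoided by choosing generator weights: iterated brackets of length $k$ inevitably hit weight $0$ modulo $q$, so $L_0=0$ fails in any truncated free nilpotent algebra and must be enforced by a further quotient). What actually saves the construction in the paper is a quantitative count, not a disjointness of supports: working in a free \emph{metabelian} algebra so that the degree-$k$ component has dimension $(p-1)\binom{k+p-3}{k-1}$, of order $p^{k}$ for fixed $k$, while the degree-$k$ pieces of both ideals are shown to have dimension $O(p^{k-1})$ (Propositions \ref{l_p,k}, \ref{i_pk}, \ref{j_pk}); positivity of the difference for $p$ large relative to $k$ is the whole theorem. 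Without carrying out such an estimate your construction could perfectly well collapse to class $2$.

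Two further points need attention. First, you must show that the centralizers in the quotient are \emph{covered} by centralizers in $L$ (otherwise $C_{\bar L}(H)$ could be larger than the image of $C_L(H)$ and fail to be abelian); this is automatic only for coprime actions, and your choice of coefficients $\F_p$ with $|H|=p$ makes the $H$-action non-coprime, so this step is not free --- the paper devotes Lemma \ref{noCoprim} to it even in its more favourable setting. Second, Lazard preserves the action and the centralizers only once the Lie-ring statement is fully established and the class is strictly less than the characteristic; invoking it does not absorb the difficulties above. So the plan is sound and parallel to the paper's, but the decisive dimension-counting argument is missing rather than merely deferred.
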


This result is based on an analogous one for Lie algebras. In this case, the transition to finite groups is obtained from the Lazard correspondence.

The main part of this paper is devoted to proving the result for Lie algebras. This will consist of the explicit construction of a family $\mathfrak{L}$ of Lie algebras, satisfying analogous properties. As a preliminary step, for every prime number $p$ we will construct a Lie algebra $L_p$ which admits a metacyclic Frobenius group of automorphisms of the form $C_p \rtimes C_{p-1}$. Its definition and properties will be discussed in Section~\ref{2}. In particular, we shall see that every algebra $L_p$ is $\mathbb{Z}$-graded. In the same section, we will also calculate the exact dimension of its homogeneous components (Proposition \ref{l_p,k}).

Let us denote by $I_p$ and $J_p$ the smallest invariant ideals generated respectively by $C_{L_p}(C_p)$ and by the derived subalgebra of $C_{L_p}(C_{p-1})$. We will study the former in Section~\ref{3} and the latter in Section~\ref{4}. Since they are both homogeneous, we will study their intersections with the homogeneous components, bounding the dimension of the subspaces thus obtained (Propositions \ref{i_pk} and \ref{j_pk}).

In Section~\ref{5} we will build the family $\mathfrak{L}$, whose elements are the quotient algebras $L_p/(I_p+J_p)$. We will prove the existence, for any natural number $n$, of a prime $p$ such that the corresponding algebra $L_p/(I_p+J_p)$ in $\mathfrak{L}$ has nilpotency class at least $n$. For this part, the previous bounds in  Propositions \ref{l_p,k}, \ref{i_pk} and \ref{j_pk} will be required. We will finally discuss the conditions under which the Lazard correspondence can be applied.

\section{The family of Lie algebras $L_p$}\label{2}

A crucial step towards the proof of Theorem \ref{thm1} consists of the following
\begin{prop}\label{prop}
There exists a family $\mathfrak{L}$ of nilpotent Lie algebras, of unbounded nilpotency class, such that any member $L$ satisfies the conditions:
\begin{enumerate}
\item $L$ admits a metacyclic Frobenius group of automorphisms;
\item the centralizer of the kernel in $L$ is trivial;
\item the centralizer of the complement in $L$ is abelian.
\end{enumerate}
\end{prop}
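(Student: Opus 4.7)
The plan is to follow the roadmap sketched in the introduction. For each prime $p$, construct as in Section \ref{2} a $\Z$-graded nilpotent Lie algebra $L_p$ carrying an action of the metacyclic Frobenius group $F_pH_p = C_p \rtimes C_{p-1}$ that respects the grading; because the actions of $F_p = C_p$ and $H_p = C_{p-1}$ will both be diagonal with respect to the grading, the centralizers $C_{L_p}(F_p)$ and $C_{L_p}(H_p)$ will automatically be homogeneous. Then set
\[
I_p = \la C_{L_p}(F_p)\ra, \qquad J_p = \la [C_{L_p}(H_p),\, C_{L_p}(H_p)]\ra,
\]
where $\la\,\cdot\,\ra$ denotes the smallest $F_pH_p$-invariant ideal containing the indicated set. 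Both ideals are homogeneous, so they are controlled degree by degree via Propositions \ref{i_pk} and \ref{j_pk}. Define $L=L_p/(I_p+J_p)$ and let $\mathfrak{L}$ be the family of all such quotients as $p$ ranges over primes.

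Next one verifies conditions (1)--(3) for each $L$. Condition (1) is automatic since $I_p+J_p$ is $F_pH_p$-invariant, so the Frobenius action descends to $L$. For condition (2), the Fitting-type decomposition $L_p = C_{L_p}(F_p) \oplus [L_p, F_p]$ into $F_p$-submodules (valid since $|F_p|=p$ is invertible in the base ring) forces $C_L(F_p)$ to coincide with the image of $C_{L_p}(F_p)$, which is zero because $C_{L_p}(F_p) \subseteq I_p$. Condition (3) follows analogously: $C_L(H_p)$ equals the image of $C_{L_p}(H_p)$, whose derived subalgebra lies inside $J_p$ and is therefore killed in $L$, so $C_L(H_p)$ is abelian.

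The remaining step, and the one where I expect the main difficulty to lie, is showing that $\mathfrak{L}$ has unbounded nilpotency class. Given $n \in \N$, one needs to exhibit a prime $p$ and an index $k \geq n$ such that
\[
\dim (L_p)_k \;>\; \dim (I_p)_k + \dim (J_p)_k,
\]
which guarantees that the $k$-th homogeneous component of $L$ is nonzero and hence that $L$ has class at least $k$. This reduces to a numerical comparison of the explicit lower bound in Proposition \ref{l_p,k} against the sum of the upper bounds in Propositions \ref{i_pk} and \ref{j_pk}, producing for each $n$ a prime $p$ and a degree $k \geq n$ where the inequality holds. This is the technical heart of the argument and the reason that the sharp dimension estimates developed in Sections \ref{3} and \ref{4} are indispensable; once they are in hand, the existence of the family $\mathfrak{L}$ follows essentially by inspection.
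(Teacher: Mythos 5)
Your proposal follows the paper's route essentially verbatim: same ideals $I_p$ and $J_p$, same quotients $L_p/(I_p+J_p)$, the same coprime covering argument for conditions (2) and (3) (the paper likewise first assumes the characteristic is coprime to $p-1$ for condition (3) and only later removes that hypothesis), and the same reduction of unbounded class to the dimension count $l_{p,k}-i_{p,k}-j_{p,k}>0$. For that last count the one observation you leave unstated is the reason it works: for fixed $k$, the lower bound $(p-1)\binom{k+p-3}{k-1}-(k-1)(p-1)^{k-1}-p\sum_{j=0}^{k-2}(p-1)^{k-2-j}\binom{j+p-2}{j}$ is a polynomial in $p$ of degree $k$ with positive leading coefficient, while the subtracted terms have degree at most $k-1$ in $p$; hence for each $k$ a sufficiently large prime $p$ makes the $k$-th component of the quotient nonzero.

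There is, however, one genuine gap: you never verify that $L=L_p/(I_p+J_p)$ is \emph{nilpotent}, which is part of the statement being proved. The algebra $L_p$ is an infinite-dimensional quotient of a free metabelian Lie algebra, with nonzero homogeneous components in every degree, so nilpotency of the quotient is not automatic and does not follow from the dimension estimates (those only give \emph{lower} bounds on the class). The paper closes this by observing that $\bar{L}_p$ is a metabelian $(\mathbb{Z}/p\mathbb{Z})$-graded Lie algebra whose zero component vanishes (precisely because $C_{L_p}(C_p)\subseteq I_p$ is killed), and then invoking Shumyatsky's theorem to conclude the class is at most $p-1$; some such argument is indispensable, since without it your family could a priori contain non-nilpotent algebras and the proposition would not be established.
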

It will be proved in Section \ref{5}. In the same section we will also see that, in order to produce the analogous result for groups, we will need to consider the case where the characteristic of the underlying field changes with $L$. This section instead is  devoted to the definition and the study of another  family of Lie algebras, which is instrumental in the construction of $\mathfrak{L}$. For every prime number $p$, we will define a $\mathbb{Z}$-graded Lie algebra $L_p$, as a quotient of a free metabelian Lie algebra. In Proposition \ref{l_p,k}, we will determine the exact dimension of each homogeneous component. Furthermore, we will construct a metacyclic Frobenius group acting by automorphisms on $L_p$. In this paper,  we use the notation $\langle U \rangle$ and $_{id}\langle U \rangle$ respectively for the Lie subalgebra and the ideal generated by a subset $U$.

Let $K$ be a field of characteristic coprime with $p$, containing a primitive $p$th root of unity. For generality, we will not specify its characteristic at this stage, because the whole construction works independently of it. However, we stress that, although this is not made explicit by notation, the characteristic may change together with the prime $p$. 

Let $X_p=\left\{ x_1, \ldots, x_{2p-2}\right\}$ be an ordered set and let $M(X_p)$ denote the free metabelian Lie algebra over $K$ with free generating set $X_p$. Following \cite{trove.nla.gov.au/work/12633643}, we know that $M(X_p)$ has a basis consisting of $X_p$ together with the left-normed Lie brackets of the form
\begin{equation}\label{BasisMet}
\left[x_{i_1}, x_{i_2},\ldots, x_{i_k} \right]\text{, with} \, k \ge 2, \; \text{and } x_{i_1}, \ldots, x_{i_k} \in X_p, \; x_{i_1}>x_{i_2} \le \ldots \le x_{i_k}.
\end{equation}
In particular, for every fixed number $k$, the above set represents a basis for the degree $k$ homogeneous component of $M(X_p)$, which is the $K$-vector space spanned by all Lie brackets of length $k$ involving the generators.

Renaming the generators in such a way that $X_p=\left\{a_1, \ldots, a_{p-1}, v_1, \ldots v_{p-1}\right\}$
and $a_1 <\ldots<a_{p-1}<v_1<\ldots<v_{p-1}$, we get that the set in (\ref{BasisMet}) consists of the following Lie brackets:
\begin{itemize}
\item $\left[a_{i_1},a_{i_2},\ldots, a_{i_j}, v_{i_{j+1}}, \ldots, v_{i_k} \right]$, for all $2 \le j \le k$, and $i_1>i_2 \le \ldots \le i_j$, $i_{j+1}\le \ldots \le i_{k}$,
\item $\left[v_{i_1},a_{i_2}, \ldots, a_{i_j},v_{i_{j+1}}, \ldots, v_{i_k} \right]$,  for all $2 \le j \le k$ and for all $i_2 \le \ldots \le i_j$, $i_{j+1}\le \ldots \le i_{k}$,
\item $\left[v_{i_1}, v_{i_2}, \ldots, v_{i_k} \right]$, for all $i_1>i_2 \le \ldots \le i_k$,
\end{itemize}
where $k \ge 2$ and $0 <i_1, i_2, \ldots ,i_k<p $.

Let $A$ denote the subalgebra generated by $a_1, \ldots, a_{p-1}$. Similarly, denote by $V$ the subalgebra generated by $v_1, \ldots, v_{p-1}$, and by $I_V$ the ideal generated by $V$.
Consider the quotient algebra $\tilde{L_p}=M(X_p)/[I_V,I_V]$. Since $I_V$ is a homogeneous ideal,  this Lie algebra inherits from $M(X_p)$ its $\mathbb{Z}$-grading. The generators form a basis for its degree $1$ homogeneous component. For every $k \ge 2$, a basis for its degree $k$ homogeneous component is given by the Lie brackets  
\begin{itemize}
\item $\left[a_{i_1},a_{i_2},\ldots, a_{i_k} \right]$, for all $i_1>i_2 \le \ldots \le i_k$,
\item $\left[a_{i_1},a_{i_2},\ldots, a_{i_{k-1}}, v_{i_k} \right]$, for all $i_1>i_2 \le \ldots \le i_{k-1}$,
\item $\left[v_{i_1},a_{i_2}, a_{i_3}\ldots, a_{i_k} \right]$, for all $i_2 \le \ldots \le i_k$,
\end{itemize}
where $0 <i_1, i_2, \ldots ,i_k<p $. Indeed, it is not hard to prove that $[I_V,I_V]$ is generated by the remaining Lie brackets, which are the ones with more than one entry in $V$.

Finally, define the Lie algebra $L_p$ to be $\tilde{L_p}/ _{id}\langle [A,A] \rangle$. Similarly, since the ideal generated by the derived algebra of $A$ is homogeneous, the algebra $L_p$ is $\mathbb{Z}$-graded. A basis for it consists of $X_p$ together with the Lie brackets
\begin{equation}\label{basis}
\mathcal{B}_{p,k}=\left\{[v_{i_0},a_{i_1}, \ldots, a_{i_{k-1}}] \, : \, 0<i_0<p, \, 0<i_1 \le i_2 \le \ldots \le i_{k-1}<p \right\},
\end{equation}
for every $k \ge 2$. By construction, it follows that the Lie algebra $L_p$ is a semidirect sum of the abelian ideal $I_V$ and the abelian subalgebra $A$. 

In the next proposition, we determine the dimension of the homogeneous components of $L_p$.
\begin{prop}\label{l_p,k}
For every prime $p$ and natural number $k$, let $l_{p,k}$ denote the dimension of $L_{p,k}$, the homogeneous component of $L_p$ of degree $k$. We have
\begin{displaymath}
l_{p,1}=2(p-1) \qquad  \text{and} \qquad l_{p,k}=(p-1)\binom{k+p-3}{k-1}\; \text{for all } k \ge 2.
\end{displaymath}
\end{prop}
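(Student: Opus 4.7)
The plan is essentially combinatorial: the set $\mathcal{B}_{p,k}$ displayed in (\ref{basis}) has already been identified as a basis of $L_{p,k}$ during the construction of $L_p$ (as the quotient of $\tilde{L}_p$ by $_{id}\langle [A,A] \rangle$, which kills the brackets involving two or more $a$'s in positions other than the tail plus all-$a$ brackets), so proving the proposition reduces to counting the elements of this basis.

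First I would dispose of the case $k=1$: the generators $X_p = \{a_1,\ldots,a_{p-1},v_1,\ldots,v_{p-1}\}$ form a basis of the degree-$1$ component, so $l_{p,1}=2(p-1)$ directly. For $k\ge 2$, the typical element of $\mathcal{B}_{p,k}$ is
\[
[v_{i_0},a_{i_1},a_{i_2},\ldots,a_{i_{k-1}}]
\]
and is determined by two independent pieces of data: the choice of the index $i_0 \in \{1,\ldots,p-1\}$, and the choice of a weakly increasing tuple $1 \le i_1 \le i_2 \le \ldots \le i_{k-1} \le p-1$. The first choice contributes a factor of $p-1$. The second choice is nothing but a multiset of size $k-1$ drawn from a set of size $p-1$, so by the standard stars-and-bars count its number equals $\binom{(p-1)+(k-1)-1}{k-1} = \binom{k+p-3}{k-1}$.

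Multiplying the two factors gives $l_{p,k}=(p-1)\binom{k+p-3}{k-1}$, which is the claimed formula. There is no real obstacle here once one trusts that $\mathcal{B}_{p,k}$ is a basis: the only point requiring any care is recognising that no relation across different choices of $i_0$ can shrink the count, which follows from the homogeneity of the ideals $[I_V,I_V]$ and $_{id}\langle [A,A] \rangle$ and the fact that these were precisely constructed to quotient out the remaining basis vectors of $M(X_p)$ displayed before (\ref{basis}). Hence the proof is a one-line counting argument after the basis identification.
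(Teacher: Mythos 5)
Your proof is correct and follows essentially the same route as the paper: both take the basis $\mathcal{B}_{p,k}$ for granted from the construction and count its elements as $(p-1)$ choices for $v_{i_0}$ times the number $\binom{k+p-3}{k-1}$ of $(k-1)$-multisets from $\{1,\ldots,p-1\}$. No issues.
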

\begin{proof}
Since the set $X_p$ is a basis for the subspace $L_{p,1}$, the first equality trivially holds. Now assume $k \ge 2$. By Formula (\ref{basis}), all possible Lie brackets in $\mathcal{B}_{p,k}$ with a fixed initial entry $v_{i_0}$ are determined by the $(k-1)$-combinations with repetition from $p-1$ elements.  The expression for $l_{p,k}$ follows from the fact that the number of these combinations is exactly  $\binom{(k-1)+(p-1)-1}{k-1}$ and $v_{i_0}$ varies among $p-1$ elements.
\end{proof}

The Lie algebra $L_p$ admits a Frobenius group of automorphisms of the form $C_p \rtimes C_{p-1}$, also denoted simply by $C_pC_{p-1}$. Let $f$ denote a generator of the Frobenius kernel $C_p$. For each $i=1, \ldots,p-1$, we define $a_i^f= \omega^i a_i$ and $v_i^f= \omega^i v_i$, where $\omega$ is a complex primitive $p$th root of unity. Let $h$ be a generator of the Frobenius complement $C_{p-1}$ and let $r$ be a primitive $(p-1)$th root of unity in $\mathbb{Z}_p$. We define $a_i^h=a_{ri}$ and $v_i^h=v_{ri}$, where the indices on the right-hand side must be taken modulo $p$. It is straightforward to check that these conditions determine a Frobenius group where $f^{h^{-1}}=f^r$.

Let $C_{L_p}(C_p)$ and $C_{L_p}(C_{p-1})$ respectively denote the centralizers of the Frobenius kernel and  the Frobenius complement. They are both $\mathbb{Z}$-graded subspaces, since the homogeneous components $L_{p,k}$ are $C_p C_{p-1}$-invariant. A basis for $C_{L_p}(C_{p}) \cap L_{p,k}$ is obtained by those Lie brackets 
$[v_{i_0},a_{i_1}, \ldots, a_{i_{k-1}}]$ in $\mathcal{B}_{p,k}$ satisfying the condition $i_0+i_1+ \cdots+ i_{k-1}\equiv 0\;  (\!\!\!\!\mod p)$.
The next proposition presents a basis for the subspace $C_{L_p}(C_{p-1}) \cap L_{p,k}$.

\begin{prop}\label{dimCentrCompl}
The basis $\mathcal{B}_{p,k}$ of $L_{p,k}$ is permuted by $C_{p-1}$. The number of its orbits is equal to the dimension of the subspace $C_{L_p}(C_{p-1}) \cap L_{p,k}$, namely $l_{p,k}/(p-1)$. Indeed, the set 
$\left\{ \sum_{j=0}^{p-2} b^{h^j}, \; b \in \mathcal{B}_{p,k}\right\}$
is a basis for such subspace.
\end{prop}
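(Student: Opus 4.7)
The plan is to show that $C_{p-1} = \langle h \rangle$ permutes $\mathcal{B}_{p,k}$ freely, count the orbits, and then identify the $h$-invariants with orbit sums by comparing coefficients. To begin, I would compute for a basis element $b = [v_{i_0}, a_{i_1}, \ldots, a_{i_{k-1}}] \in \mathcal{B}_{p,k}$ that $b^h = [v_{r i_0}, a_{r i_1}, \ldots, a_{r i_{k-1}}]$, with indices read modulo $p$. Since $r$ is a unit modulo $p$, each $r i_j$ again lies in $\{1, \ldots, p-1\}$. The subtle point is that the tail indices $r i_1, \ldots, r i_{k-1}$ must be put into non-decreasing order to recognise $b^h$ as an element of $\mathcal{B}_{p,k}$; this reordering is legitimate in $L_p$ because, by the Jacobi identity, $[v, a_i, a_j] - [v, a_j, a_i] = [[a_j, a_i], v]$, and the right-hand side lies in ${}_{id}\langle [A,A] \rangle$, which is killed in $L_p$. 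Thus $h$ permutes $\mathcal{B}_{p,k}$.

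Next I would check that the permutation action is free: if $h^j$ fixes $b$, then its effect on the initial index forces $r^j i_0 \equiv i_0 \pmod p$, whence $r^j \equiv 1 \pmod p$ and $(p-1) \mid j$. So every orbit has size exactly $p-1$, and the number of orbits is $|\mathcal{B}_{p,k}|/(p-1) = l_{p,k}/(p-1)$. For each $b \in \mathcal{B}_{p,k}$ the orbit sum $\sum_{j=0}^{p-2} b^{h^j}$ is manifestly $h$-invariant, hence $C_{p-1}$-invariant, and distinct orbit sums are supported on disjoint orbits inside $\mathcal{B}_{p,k}$, so they are linearly independent. Conversely, writing any $h$-invariant element of $L_{p,k}$ as $\sum_b c_b b$ and comparing coefficients in $\sum_b c_b b = \sum_b c_b b^h$ shows the scalars $c_b$ must be constant on orbits, so the element lies in the span of the orbit sums. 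This yields both the claimed basis and the equality of dimensions.

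The main obstacle I expect is the very first step: one must be sure that reordering the $a$-entries in the tail of a bracket preserves it, and this is precisely where the definition of $L_p$ as a quotient by ${}_{id}\langle [A,A] \rangle$ is indispensable. Once this is in hand, the remaining orbit-counting and invariants argument is routine linear algebra, and does not even require an averaging operator (so no hypothesis on the characteristic of $K$ beyond coprimality with $p$ is needed here).
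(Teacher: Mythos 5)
Your proposal is correct and follows essentially the same route as the paper: show $h$ permutes $\mathcal{B}_{p,k}$ (after reordering the $a$-entries, which is valid precisely because $_{id}\langle[A,A]\rangle$ is factored out), observe that orbits have length $p-1$ by looking at the first entry, and identify the fixed space with the span of the orbit sums via the permutation-module structure. Your explicit Jacobi-identity justification of the reordering and the coefficient-comparison argument for the invariants are just more detailed versions of steps the paper states more briefly.
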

\begin{proof}
First we need to prove that the generator $h$ of $C_{p-1}$ acts as a permutation of $\mathcal{B}_{p,k}$. This follows directly from the formula for $\mathcal{B}_{p,k}$. Indeed, for every Lie bracket $[v_{i_0}, a_{i_1}, \ldots, a_{i_{k-1}}]$ in this set, its image under $h$ is $[v_{ri_0}, a_{ri_1}, \ldots, a_{ri_{k-1}}]$, again contained in $\mathcal{B}_{p,k}$, possibly after a rearrangement of the terms $a_{ri_j}$. Furthermore, every orbit has length $p-1$. Indeed, for $j=0, \ldots, p-2$, the images 
$[v_{r^ji_0}, a_{r^ji_1},\ldots,a_{r^ji_{k-1}}]$ under $h^j$ are all distinct because they differ in the first entry. As a consequence, we get that the number of orbits is $|\mathcal{B}_{p,k}|/(p-1)$.

To each orbit there corresponds one vector in $C_{L_p}(C_{p-1}) \cap L_{p,k}$, namely the sum over its elements. This follows directly from the structure of a permutation module, where to each cycle of the permutation there corresponds one eigenvector with eigenvalue $1$.
\end{proof}

The algebra $L_p$ also has a natural $\left(\mathbb{Z}/p \mathbb{Z}\right)$-grading, arising from the eigenspace decomposition corresponding to the action of $C_p$. Indeed, for every $j=0, \ldots, p-1$, define $^{j}\!L_p=\left\{x \in L_p | x^f=\omega^j x \right\}$. We call this subspace the $C_p$-homogeneous component of weight $j$. Because of the definition of $f$, a Lie bracket $[v_{i_0},a_{i_1}, \ldots, a_{i_{k-1}}]$ in $\mathcal{B}_{p,k}$ belongs to such subspace if and only if $i_0+i_1+\cdots+i_{k-1} \equiv j \, (\!\!\!\!\mod p)$. Clearly, the $C_p$-homogeneous component of weight $0$ corresponds to the centralizer $C_{L_p}(C_p)$ and it is $C_{p-1}$-invariant.

Since the homogeneous components $L_{p,k}$ are $C_p$-invariant, they are, in turn, $\mathbb{Z}_p$-graded by this basic fact of linear algebra:

\begin{lemma}\label{lemma}
Let $K$ be a field and $V$ a $K$-vector space. Let $\phi:V \rightarrow V$ be a linear map and $U$ a $\phi$-invariant subspace.  Assume that an element $u$ of $U$ can be written in the form
$u=u_1+\cdots+u_l$, where $u_i$ are eigenvectors of $\phi$ corresponding to distinct eigenvalues $\alpha_i$. Then $u_i \in U$ for all $1 \le i \le l$.
\end{lemma}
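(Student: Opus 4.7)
This is a standard linear-algebra fact, and there are two natural approaches. The plan is to present the cleanest one: use $\phi$-invariance together with Lagrange interpolation to project $u$ onto each eigencomponent while staying inside $U$.

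More precisely, since $U$ is $\phi$-invariant, it is invariant under every polynomial in $\phi$ with coefficients in $K$. So for any $p(t)\in K[t]$ we have $p(\phi)(u)\in U$. The key step is then to exhibit, for each $j\in\{1,\dots,l\}$, a polynomial $p_j(t)\in K[t]$ such that $p_j(\phi)(u)=u_j$; once this is done, we immediately conclude $u_j\in U$.

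For this, I would set
\[
p_j(t)=\prod_{i\neq j}\frac{t-\alpha_i}{\alpha_j-\alpha_i},
\]
which is well-defined because the $\alpha_i$ are distinct. By Lagrange interpolation, $p_j(\alpha_i)=\delta_{ij}$. Since each $u_i$ is an eigenvector of $\phi$ with eigenvalue $\alpha_i$, a direct computation gives
\[
p_j(\phi)(u)=\sum_{i=1}^{l}p_j(\alpha_i)\,u_i=u_j,
\]
and so $u_j\in U$ as required.

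There is no serious obstacle here; the only thing to note is that the argument relies on the $\alpha_i$ being pairwise distinct (so the denominators in $p_j$ are nonzero) and on $K$ being a field (so the scalars $1/(\alpha_j-\alpha_i)\in K$ make sense). An equivalent induction on $l$ could be given by applying $\phi-\alpha_l\cdot\mathrm{Id}$ to peel off $u_l$, but the Lagrange-interpolation formulation makes the result transparent in one line.
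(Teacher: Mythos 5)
Your proof is correct. The paper states this lemma without proof, labelling it a ``basic fact of linear algebra,'' so there is no argument in the text to compare against; your Lagrange-interpolation argument (equivalently, inverting the Vandermonde system given by $u, \phi(u), \ldots, \phi^{l-1}(u)$, or inductively peeling off components with $\phi-\alpha_l\,\mathrm{Id}$) is a clean and complete way to supply the missing details.
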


 Hence, we can write
\begin{equation*}
L_{p,k}= \bigoplus_{j=0}^{p-1} L_{p,k} \, \cap \,^{j}\!L_p.
\end{equation*}
The subspaces on the right-hand side are permuted by $h$, according to the formula $ (L_{p,k} \,\cap\, ^{j}\!L_p)^h= L_{p,k} \,\cap\, ^{rj}\!L_p$. Hence, for every prime $p$ and positive integer $k$, the subspaces $W_{p,k}=L_{p,k} \, \cap \,^{0}\!L_p$ and 
$W'_{p,k}=L_{p,k} \, \cap \,^{1}\!L_p \oplus \cdots \oplus L_{p,k} \, \cap \,^{p-1}\!L_p$ are $C_pC_{p-1}$-modules. The basis of $W'_{p,k}$ consists of eigenvectors for the action of $C_p$ which are permuted by $C_{p-1}$ in orbits of length $p-1$. Each of these orbits spans a $C_pC_{p-1}$-module of dimension $p-1$. 
Hence, we have that 
\begin{equation*}
\dim(W'_{p,k})=(p-1) \dim(C_{L_p}(C_{p-1}) \cap W'_{p,k}).
\end{equation*}
Using Proposition \ref{dimCentrCompl} and the $C_{p-1}$-invariance of both subspaces, we get the corresponding formula for $W_{p,k}$.

\section{The ideal generated by $C_{L_p}(C_p)$}\label{3}
Let $I_p$ denote the ideal generated by the centralizer $C_{L_p}(C_p)$. We have already mentioned that  $C_{L_p}(C_p)$ is degree-homogeneous and $C_pC_{p-1}$-invariant. Hence, the same properties hold for $I_p$. In this section, for every natural number $k$, we bound the dimension of the subspaces $I_p \cap L_{p,k}$. 

Since $v_i$ and $a_i$ are eigenvectors of $f$ with corresponding eigenvalues $\omega^i$, a basis for the subspace $I_p \cap L_{p,k}$ is given by those Lie brackets $[v_{i_0},a_{i_1}, \ldots, a_{i_{k-1}}]$ belonging to $\mathcal{B}_{p,k}$ for which there exists a permutation $\pi$ in the symmetric group $S_{k-1}$ such that $i_0+i_{\pi(1)}+\cdots+i_{\pi(s)} \equiv 0\; (\!\!\!\!\mod p)$, for some $s<k$.
In the next proposition, we bound the dimension of this subspace.

\begin{prop}\label{i_pk}
For every prime number $p$ and natural number $k$, let $i_{p,k}$ denote the dimension of the homogeneous subspace $I_p \cap L_{p,k}$. Then
\begin{equation*}
i_{p,1}=0 \qquad \text{and} \qquad   i_{p,k} \le (k-1)(p-1)^{k-1} \quad \text{for all } k \ge 2.
\end{equation*}
\end{prop}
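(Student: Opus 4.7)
My plan is to leverage the combinatorial description of $I_p\cap L_{p,k}$ given in the paragraph immediately preceding the proposition: a basis bracket $[v_{i_0},a_{i_1},\ldots,a_{i_{k-1}}]\in\mathcal{B}_{p,k}$ lies in $I_p$ precisely when some rearrangement $\pi\in S_{k-1}$ produces an initial partial sum $i_0+i_{\pi(1)}+\cdots+i_{\pi(s)}$ (with $1\le s\le k-1$) that vanishes modulo $p$. The strategy is to over-count by passing from these unordered witnesses to ordered witnesses, where the counting becomes elementary. First I would dispose of the trivial case $k=1$: the subspace $L_{p,1}$ is spanned by $X_p$, and each element of $X_p$ is an eigenvector of $f$ with eigenvalue $\omega^i\neq 1$, so $C_{L_p}(C_p)\cap L_{p,1}=0$. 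Since $I_p$ is generated as an ideal by $C_{L_p}(C_p)$ and Lie brackets strictly increase the degree, this forces $I_p\cap L_{p,1}=0$ and hence $i_{p,1}=0$.

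For $k\ge 2$ and each $s\in\{1,\ldots,k-1\}$, I would introduce
\[
T_s=\bigl\{(i_0,i_1,\ldots,i_{k-1})\in\{1,\ldots,p-1\}^k\ :\ i_0+i_1+\cdots+i_s\equiv 0\pmod p\bigr\}.
\]
Any assignment of the $k-1$ coordinates other than $i_s$ in $\{1,\ldots,p-1\}$ determines $i_s$ modulo $p$; since the value $0$ is excluded, at most one admissible $i_s$ remains, so $|T_s|\le(p-1)^{k-1}$ and
\[
\Bigl|\bigcup_{s=1}^{k-1}T_s\Bigr|\le(k-1)(p-1)^{k-1}.
\]

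The final step is to consider the map $\phi:\bigcup_s T_s\to\mathcal{B}_{p,k}$ that sorts the last $k-1$ entries of each tuple into non-decreasing order. By the characterization recalled above, the image of $\phi$ equals the basis of $I_p\cap L_{p,k}$: any tuple in $T_s$ maps to a bracket that is bad via the identifying permutation, and conversely any bad bracket admits a preimage obtained by placing the witnessing entries $i_{\pi(1)},\ldots,i_{\pi(s)}$ first. Hence $i_{p,k}\le|\bigcup_s T_s|\le(k-1)(p-1)^{k-1}$. There is no serious obstacle here: $\phi$ is highly non-injective (a single bad bracket has several ordered lifts), which is precisely why the bound comes out loose rather than sharp, but this is all the statement requires.
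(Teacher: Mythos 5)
Your proof is correct, but it takes a genuinely different route from the paper's. You work directly with the combinatorial characterization of the basis of $I_p\cap L_{p,k}$ (the ``bad'' brackets admitting a vanishing initial partial sum), lift each bad bracket to an ordered tuple in some $T_s$, and apply a union bound over the position $s$ of the witnessing partial sum; the count $|T_s|\le(p-1)^{k-1}$ and the surjectivity of the sorting map $\phi$ are both verified correctly, and the restriction to $s\ge 1$ is harmless since $s=0$ would force $i_0\equiv 0\pmod p$, which is excluded. The paper instead proves the identity $I_p\cap L_{p,k}=[I_p\cap L_{p,k-1},L_{p,1}]+C_{L_p}(C_p)\cap L_{p,k}$ from the ideal-generation description of $I_p$, bounds the two summands by $(p-1)\,i_{p,k-1}$ and $(p-1)^{k-1}$ respectively, and solves the resulting recursion $i_{p,k}\le(p-1)i_{p,k-1}+(p-1)^{k-1}$ by induction. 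The two arguments yield the same bound for essentially the same underlying reason (your $T_{k-1}$ term corresponds to the paper's $C_{L_p}(C_p)\cap L_{p,k}$ summand, and the terms $T_1,\ldots,T_{k-2}$ unwind the recursion), but yours is a one-shot count that leans on the basis description asserted just before the proposition, while the paper's recursive scheme avoids that reliance and has the structural advantage of running in parallel with the treatment of $J_p$ in the next section.
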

\begin{proof}
First, we recall that $L_{p,1}$ is the subspace of homogeneous elements of $L_p$ of degree $1$ and it is generated by $X_p$. Since none of the $v_i, \, a_i$ is centralized by $C_p$, we clearly have $i_{p,1}=0$. More in general, we have
\begin{align*}
I_p \cap L_{p,k} &= \sum_{j=1}^k [C_{L_p}(C_p) \cap L_{p,j},\underbrace{L_{p,1},\ldots,L_{p,1}}_{k-j}]\\
           &=\sum_{j=1}^{k-1}[[C_{L_p}(C_p) \cap L_{p,j},\underbrace{L_{p,1},\ldots,L_{p,1}}_{k-1-j}],L_{p,1}]+C_{L_p}(C_p) \cap L_{p,k}\\
           &=\Big[\sum_{j=1}^{k-1}[C_{L_p}(C_p) \cap L_{p,j},\underbrace{L_{p,1},\ldots,L_{p,1}}_{k-1-j}],L_{p,1}\Big]+C_{L_p}(C_p) \cap L_{p,k}\\
           &=[I_p \cap L_{p,{k-1}},L_{p,1}]+C_{L_p}(C_p) \cap L_{p,k}.
\end{align*}
Hence, we obtain $i_{p,k} \le \dim([I_p \cap L_{p,{k-1}},L_{p,1}])+\dim(C_{L_p}(C_p) \cap L_{p,k})$. 

The subspace $C_{L_p}(C_p) \cap L_{p,k}$ has basis given by the Lie brackets $[v_{i_0},a_{i_1}, \ldots, a_{i_{k-1}}]$ in $\mathcal{B}_{p,k}$ for which $i_0+i_1+\cdots+ i_{k-1} \equiv 0\, (\!\!\!\!\mod p)$. This implies that the last index $i_{k-1}$ is uniquely determined by the previous ones and hence, $\dim(C_{L_p}(C_p) \cap L_{p,k})\le (p-1)^{k-1}$. Notice that this bound is only sharp for $k=2$, because the set $\{[v_i,a_{-i}], \;i=1, \ldots, p-1\}$ is a basis for $I_p \cap L_{p,2}$. However, for $k \ge 3$ this argument counts some basis elements in $\mathcal{B}_{p,k}$ several times, as we consider all simple Lie bracket of degree $k$, not only the ``ordered'' ones.

The subspace $[I_p \cap L_{p,{k-1}},L_{p,1}]$ is generated by the Lie brackets of the form $[x, a_j]$, where $x$ ranges over a basis of $I_p \cap L_{p,{k-1}}$ and $j=1, \ldots, p-1$. Hence, we have $\dim([I_p \cap L_{p,{k-1}},L_{p,1}]) \le (p-1) i_{p,k-1}$. Once again, we point out that the equality trivially holds for $k=2$ , and for $k=3$. The inequality is instead strict for larger values of $k$. These bounds together give the recursive formula
\begin{equation*}
i_{p,k} \le (p-1)i_{p,k-1}+(p-1)^{k-1},
\end{equation*}
from which the conclusion follows by induction.
\end{proof}

This proposition concludes this section and the study of the ideal $I_p$. Indeed, the bound on the dimension of its homogeneous components $I_p \cap L_{p,k}$ is all we need in the proof of Proposition \ref{prop}.

\section{The ideal generated by $[C_{L_p}(C_{p-1}),C_{L_p}(C_{p-1})]$}\label{4}
Let $J_p$ denote the smallest $C_pC_{p-1}$-invariant ideal containing the derived subalgebra $[C_{L_p}(C_{p-1}),C_{L_p}(C_{p-1})]$. Since $C_{L_p}(C_{p-1})$ is not $C_p$-invariant, then $J_p$ is strictly bigger than the ideal generated simply by $[C_{L_p}(C_{p-1}),C_{L_p}(C_{p-1})]$. Indeed, in this section  we will prove that $J_p$ is generated, as ideal, by the $C_p$-homogeneous components of the generators of $[C_{L_p}(C_{p-1}),C_{L_p}(C_{p-1})]$. This is done in Proposition \ref{Jp generators}. Moreover, as previously done for $I_p$, we will bound the dimension of the subspaces $J_p \cap L_{p,k}$, for every prime number $p$ and integer $k$ (Proposition \ref{j_pk}). The bound is obtained as the exact result of a recursion.

We start determining the generators of the subalgebra $[C_{L_p}(C_{p-1}),C_{L_p}(C_{p-1})]$. As already pointed out, the centralizer $C_{L_p}(C_{p-1})$ is degree-homogeneous and a basis for the subspaces 
$C_{L_p}(C_{p-1}) \cap L_{p,k}$ is given in Proposition \ref{dimCentrCompl}. The subalgebra $[C_{L_p}(C_{p-1}),C_{L_p}(C_{p-1})]$ is generated by Lie brackets of the form $[x,y]$, where $x$ and $y$ lie in the basis of $C_{L_p}(C_{p-1})$. Those Lie brackets are trivial unless one of the entries is $a_1+a_2+\cdots+a_{p-1}$. Hence, the derived subalgebra of $C_{L_p}(C_{p-1})$ is generated by the Lie brackets of the form $[x, a_1+\cdots+a_{p-1}]$, where $x$ lies in the basis of $C_{L_p}(C_{p-1})$.

In the next proposition we produce a set of generators of the ideal $J_p$.

\begin{prop} \label{Jp generators}
The smallest ideal containing $[C_{L_p}(C_{p-1}),C_{L_p}(C_{p-1})]$ and invariant under the action of the group $C_pC_{p-1} $ is generated, as an ideal, by the $C_p$-homogeneous components of the generators of $[C_{L_p}(C_{p-1}),C_{L_p}(C_{p-1})]$.
\end{prop}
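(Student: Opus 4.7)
The plan is to write $S$ for the generating set
$\{[x,\, a_1+a_2+\cdots+a_{p-1}] : x \text{ a basis element of } C_{L_p}(C_{p-1})\}$
of the derived subalgebra $[C_{L_p}(C_{p-1}),C_{L_p}(C_{p-1})]$ identified just before the statement, and to let $\tilde{S}$ be the collection of all $C_p$-homogeneous components of elements of $S$. Writing $N$ for the ideal of $L_p$ generated by $\tilde{S}$, the desired conclusion is $J_p = N$, which I would prove by two inclusions.

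For $N \subseteq J_p$, the key tool is Lemma~\ref{lemma} applied with $\phi = f$ and $U = J_p$: each $g \in S$ lies in $J_p$, and $J_p$ is $C_p$-invariant by hypothesis, so the lemma forces the $C_p$-homogeneous components of $g$ to lie in $J_p$ as well. Hence $\tilde{S} \subseteq J_p$, and $N \subseteq J_p$ follows from the fact that $J_p$ is an ideal.

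For the reverse inclusion, I would verify that $N$ is a $C_pC_{p-1}$-invariant ideal containing the derived subalgebra $[C_{L_p}(C_{p-1}),C_{L_p}(C_{p-1})]$; the minimality in the definition of $J_p$ then gives $J_p \subseteq N$. Containment is immediate: each $g \in S$ is the sum of its $C_p$-homogeneous components, all of which lie in $\tilde{S} \subseteq N$, and $N$, being a subalgebra, contains the derived subalgebra that $S$ generates. For $C_p$-invariance, the generators $u \in \tilde{S}$ are $f$-eigenvectors, so for any $z \in L_p$ one has $[u,z]^f = \omega^{i(u)}[u, z^f]$ with $i(u)$ the $C_p$-weight of $u$; iterating this identity on nested brackets yields $N^f \subseteq N$.

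The delicate point is the $C_{p-1}$-invariance of $N$. Here I would use that $C_{L_p}(C_{p-1})$ is the fixed subalgebra of the automorphism $h$, hence closed under $h$ and containing its own derived subalgebra; in particular every $g \in S$ satisfies $g^h = g$. Writing $g = g_0 + \cdots + g_{p-1}$ with $g_j$ of $C_p$-weight $j$ and combining the permutation relation $(^{j}\!L_p)^h = \,^{rj}\!L_p$ recorded in Section~\ref{2} with uniqueness of the weight decomposition, I would obtain $g_j^h = g_{rj}$. Thus $h$ permutes the set $\tilde{S}$, and this transfers to $h$-invariance of the ideal $N$, completing the proof.
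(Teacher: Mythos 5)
Your proof is correct and follows essentially the same route as the paper: one inclusion via Lemma~\ref{lemma} applied to the $C_p$-invariant ideal $J_p$, the other by checking that the ideal generated by the homogeneous components is $C_pC_{p-1}$-invariant and contains the derived subalgebra, then invoking minimality. The only (harmless) difference is that you deduce $g_j^h=g_{rj}$ abstractly from $g^h=g$ and uniqueness of the weight decomposition, where the paper computes the weight components of $[x,\sum_i a_i]$ explicitly.
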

\begin{proof}
Consider $[x, \sum_{i=1}^{p-1} a_i]$, where $x$ belongs to the basis of $C_{L_p}(C_{p-1})$ given in Proposition \ref{dimCentrCompl}. We can write $x$ as a linear combination of eigenvectors of $f$, namely $x=\sum_{j=0}^{p-1}x_j$, where $x_j$ is in the $C_p$-homogeneous component of weight $j$. Since $x$ is in $C_{L_p}(C_{p-1})$, we necessarily have that $x_{rj}=(x_j)^h$, for all $j=0,\ldots p-1$. The same relation is true for the $C_p$-homogeneous components of $[x, \sum_{i=1}^{p-1} a_i]$. Indeed, plugging the decomposition of $x$ into the Lie bracket, we can rewrite it as
$\sum_{j=0}^{p-1} \left( \sum_{k \ne j} [x_k, a_{j-k}]\right)$, where the sum in parenthesis represents its $C_p$-homogeneous component of weight $j$. Applying $h$ to it, we get
\begin{equation*}
\left( \sum_{k \ne j} [x_k, a_{j-k}]\right)^h=\sum_{k \ne j} [x_k^h, a_{j-k}^h]=
 \sum_{k \ne j}[x_{rk}, a_{rj-rk}],
\end{equation*}
where this should be interpreted as a sum over $k$, where $k$ varies from $0$ to $p-1$.
This is exactly the homogeneous component of weight $rj$. 

We have just shown that the set of the $C_p$-homogeneous components of the generators of $[C_{L_p}(C_{p-1}),C_{L_p}(C_{p-1})]$ is permuted by $C_{p-1}$. This implies that the ideal generated by this set is $C_{p-1}$-invariant. Moreover, this ideal is also $C_p$-invariant because, by construction, its generators are $C_p$-homogeneous. It contains the subalgebra $[C_{L_p}(C_{p-1}),C_{L_p}(C_{p-1})]$ because it contains all its generators (as a sum of their $C_p$-homogeneous components). Hence, it contains $J_p$, for minimality of $J_p$.

To prove the opposite inclusion, it is sufficient to show that all $C_p$-homogeneous components of the generators of $[C_{L_p}(C_{p-1}),C_{L_p}(C_{p-1})]$ are in $J_p$. This is true by Lemma \ref{lemma}.
\end{proof}
We can now bound the dimension of the degree-homogeneous components of $J_p$.
\begin{prop}\label{j_pk}
For every prime number $p$ and natural number $k$, let $j_{p,k}$ denote the dimension of the subspace $J_p \cap L_{p,k}$, the homogeneous component of $J_p$ of degree $k$. Then
\begin{equation*}
j_{p,1}=0 \qquad \text{and} \qquad j_{p,k} \le p\left(\sum_{j=0}^{k-2} (p-1)^{k-2-j} \binom{j+p-2}{j}\right) \, \text{for all } k \ge 2.
\end{equation*}
\end{prop}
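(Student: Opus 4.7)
The plan is to derive a recursion for $j_{p,k}$ analogous to the one used for $i_{p,k}$ in Proposition \ref{i_pk}, then solve it by induction. The crucial preliminary is that $J_p \subseteq I_V$. The basis of $C_{L_p}(C_{p-1})$ from Proposition \ref{dimCentrCompl} consists, apart from the element $a_1+\cdots+a_{p-1}$ in degree $1$, of elements lying in $I_V$. Since $I_V$ and $A$ are both abelian in $L_p$, any nonzero bracket between two such basis elements must involve exactly one factor from $A$, and hence lies in $I_V$. As $I_V$ is itself a $C_pC_{p-1}$-invariant ideal containing $[C_{L_p}(C_{p-1}),C_{L_p}(C_{p-1})]$, we obtain $J_p \subseteq I_V$. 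In particular $j_{p,1}=0$ and $[J_p,v_i]=0$ for every $i$.

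Since $L_p$ is generated in degree one by $X_p$, the identity $[J_p,v_i]=0$ gives the decomposition
\begin{equation*}
J_p \cap L_{p,k} = [J_p \cap L_{p,k-1},\,\mathrm{span}(a_1,\ldots,a_{p-1})] + N_{p,k},
\end{equation*}
where $N_{p,k}$ denotes the degree-$k$ part of the space of generators of $J_p$. The first summand has dimension at most $(p-1)j_{p,k-1}$. By Proposition \ref{Jp generators}, $N_{p,k}$ is the span of the $C_p$-homogeneous components of $D_{p,k}:=[C_{L_p}(C_{p-1}),C_{L_p}(C_{p-1})] \cap L_{p,k}$; since every element of $L_p$ has at most $p$ nonzero $C_p$-homogeneous components, $\dim N_{p,k} \leq p\dim D_{p,k}$. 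Moreover, $D_{p,k}$ is spanned by brackets $[y,a_1+\cdots+a_{p-1}]$ where $y$ runs over the basis elements of $C_{L_p}(C_{p-1}) \cap L_{p,k-1}$ that lie in $I_V$. For $k=2$ this yields only $[v_1+\cdots+v_{p-1},a_1+\cdots+a_{p-1}]$, contributing $1=\binom{p-2}{0}$; for $k \geq 3$, Proposition \ref{dimCentrCompl} provides $l_{p,k-1}/(p-1)=\binom{k-2+p-2}{k-2}$ such brackets.

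Combining the three bounds yields the recursion
\begin{equation*}
j_{p,k} \leq (p-1)\,j_{p,k-1} + p\binom{k-2+p-2}{k-2}, \qquad k \geq 2,
\end{equation*}
which, together with $j_{p,1}=0$, unfolds by induction on $k$ to the stated closed form. The main obstacle is justifying $J_p \subseteq I_V$ cleanly, since this simultaneously kills the base case and ensures the bracket in the recursion picks up the factor $(p-1)$ rather than $2(p-1)$; once that is in hand, the remaining combinatorics parallel the proof of Proposition \ref{i_pk}.
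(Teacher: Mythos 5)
Your argument is correct and follows essentially the same route as the paper: the same decomposition $J_p\cap L_{p,k}=[J_p\cap L_{p,k-1},L_{p,1}]+N_{p,k}$ (with the $v_i$'s killed because $J_p\subseteq I_V$ and $I_V$ is abelian), the same bounds $(p-1)j_{p,k-1}$ and $p\cdot l_{p,k-1}/(p-1)$ for the two summands, and the same recursion $j_{p,k}\le(p-1)j_{p,k-1}+p\binom{k+p-4}{k-2}$ unfolded by induction. One tiny inaccuracy: $j_{p,1}=0$ does not follow from $J_p\subseteq I_V$ (the degree-one part of $I_V$ is nonzero) but rather from $J_p\subseteq[L_p,L_p]$, as the paper notes; this does not affect the rest of the argument.
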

\begin{proof}
First, observe that the ideal $J_p$ is contained in the derived subalgebra  of $L_p$. This implies trivial intersection with the subspace $L_{p,1}$ of homogeneous elements of degree $1$ and hence $j_{p,1}=0$.

More generally, if $x$ ranges in the set of generators of $[C_{L_p}(C_{p-1}),C_{L_p}(C_{p-1})]$ and $x_l$ denotes its $C_p$-homogeneous component of weight $l$, by the previous result we can write
\begin{align*}
J_p \cap L_{p,k}&\! =\!\sum_{j=1}^k \! \Big(\! \sum_{l=0}^{p-1} \big[\langle x_l : x \in [C_{L_p}(C_{p-1}),C_{L_p}(C_{p-1})] \cap L_{p,j} \rangle,\underbrace{L_{p,1},\ldots,L_{p,1}}_{k-j}\big]\Big)\\
           &\!=\!\sum_{j=1}^{k-1} \! \Big(\! \sum_{l=0}^{p-1} \big[\big[\langle x_l :  x \in [C_{L_p}(C_{p-1}),C_{L_p}(C_{p-1})] \cap L_{p,j} \rangle,\underbrace{L_{p,1},\ldots,L_{p,1}}_{k-1-j}\big],L_{p,1}\big]\Big)\\
           &\quad + \sum_{l=0}^{p-1} \langle x_l :  x \in [C_{L_p}(C_{p-1}),C_{L_p}(C_{p-1})] \cap L_{p,k} \rangle\\
           &=[J_p \cap L_{p,k-1},L_{p,1}] + \sum_{l=0}^{p-1} \langle x_l : x \in [C_{L_p}(C_{p-1}),C_{L_p}(C_{p-1})] \cap L_{p,k} \rangle.
\end{align*}
Hence, we have $j_{p,k} \le \dim([J_p \cap L_{p,k-1},L_{p,1}])+p \dim([C_{L_p}(C_{p-1}),C_{L_p}(C_{p-1})] \cap L_{p,k})$. 

The subalgebra $[J_p \cap L_{p,k-1},L_{p,1}]$, which is non-trivial only when $k \ge 3$, is generated by the Lie brackets of the form $[x,a_j]$, where $x$ is an element of the basis of $J_p \cap L_{p,k-1}$ and $j=1, \ldots, p-1$. Indeed, the Lie brackets $[x, v_i]$ are trivial by construction. Hence, we get 
$\dim([J_p \cap L_{p,k-1},L_{p,1}]) \le (p-1) j_{p,{k-1}}$.

As previously remarked, the subalgebra $[C_{L_p}(C_{p-1}),C_{L_p}(C_{p-1})] \cap L_{p,k}$ is generated by the elements $[x, \sum_{i=1}^{p-1} a_i]$, where $x$ is in the basis of $C_{L_p}(C_{p-1}) \cap L_{p,{k-1}}$. Hence, its dimension is not greater than the dimension of $C_{L_p}(C_{p-1}) \cap L_{p,{k-1}}$, namely $l_{p,k-1}/(p-1)$, by Proposition \ref{dimCentrCompl}. Using the expression for $l_{p,k}$ computed in Proposition \ref{l_p,k}, we get the following recursive formula
\begin{equation*}
j_{p,k} \le (p-1)j_{p,k-1}+p \binom{k+p-4}{k-2}.
\end{equation*}
We now prove the desired bound for $j_{p,k}$ by induction on $k$. Observe that the subspace $[C_{L_p}(C_{p-1}),C_{L_p}(C_{p-1})] \cap L_{p,2}$ is one-dimensional, generated by the Lie bracket $[\sum_{i=1}^{p-1}v_i, \sum_{j=1}^{p-1}a_j]$. According to Proposition \ref{Jp generators}, its $C_p$-homogeneous components generate $J_p \cap L_{p,2}$. Hence, the bound holds for $k=2$. Suppose now that the bound is true for $k-1$. Using the above inequality in the inductive hypothesis, we find:
\begin{align*}
j_{p,k} &\le (p-1)p \left( \sum_{j=0}^{k-3} (p-1)^{k-3-j} \binom{j+p-2}{j} \right)+p \binom{k+p-4}{k-2}\\
&= p \left(\sum_{j=0}^{k-3} (p-1)^{k-2-j} \binom{j+p-2}{j} \right)+p \binom{k+p-4}{k-2}\\
& = p \left(\sum_{j=0}^{k-2} (p-1)^{k-2-j} \binom{j+p-2}{j} \right).
\end{align*}
This concludes the proof.
\end{proof}

This also concludes the section about the ideal $J_p$. As in the case of the ideal $I_p$, all we will need later on is the bound for the dimension of its homogeneous components. 

\section{Proofs of Proposition \ref{prop} and Theorem \ref{thm1}} \label{5}
In this last section, we will prove  Proposition \ref{prop} by explicit construction of the Lie algebras $\bar{L}_p$ belonging to the family $\mathfrak{L}$. We will then apply Lazard correspondence to prove Theorem \ref{thm1}, where we will need to impose some conditions on the underlying field of each $\bar{L}_p$ in $\mathfrak{L}$. Recall that, until now, the only requirements on the underlying field of $L_p$ were having characteristic different from $p$ and containing a primitive $p$th root of unity.

\begin{proof}[Proof of Proposition \ref{prop}]For every prime number $p$, define $\bar{L}_p$ to be the quotient algebra $L_p/(I_p+J_p)$. Since the two ideals $I_p$ and $J_p$ are graded, the same is true for their sum. In particular, every homogeneous component is given by the sum of the corresponding ones: $(I_p+J_p) \cap L_{p,k}=I_p \cap L_{p,k}+J_p \cap L_{p,k}$. This implies that $\bar{L}_p$ inherits from $L_p$ its $\mathbb{Z}$-grading . Let $\bar{L}_{p,k}$ denote the image of $L_{p,k}$ in the quotient and let $\bar{l}_{p,k}$ indicate its dimension. We have
\begin{align*}
\bar{l}_{p,k} &= l_{p,k}- \dim((I_p+J_p) \cap L_{p,k})=l_{p,k}- \dim(I_p \cap L_{p,k} +J_p \cap L_{p,k})\\
              & \ge l_{p,k}- \dim(I_p \cap L_{p,k})-\dim(J_p \cap L_{p,k})= l_{p,k}-i_{p,k}-j_{p,k}.
\end{align*}

Substituting in this formula the expression for $l_{p,k}$ from Proposition \ref{l_p,k}, and the upper bounds for $i_{p,k}$ and $j_{p,k}$, respectively from Proposition \ref{i_pk} and \ref{j_pk}, we get
\begin{equation*}
\bar{l}_{p,k} \ge (p-1)\binom{k+p-3}{k-1}-(k-1)(p-1)^{k-1}-p\left(\sum_{j=0}^{k-2} (p-1)^{k-2-j} \binom{j+p-2}{j}\right).
\end{equation*}
We observe that the right-hand side is a polynomial function of the prime number $p$, whose leading term is $p^k/(p-1)!$ and comes from the formula for $l_{p,k}$.

The algebra $\bar{L}_p$ admits the same Frobenius group $C_pC_{p-1}$, as a group of automorphisms. Indeed, since $I_p$ and $J_p$ are $C_p C_{p-1}$-invariant, the generator $f$ of $C_p$ and the generator $h$ of $C_{p-1}$ induce automorphisms of the quotient algebra, defined in the canonical way. The orders of $f$ and $h$ remain unchanged in their action on the quotient, namely  equal to $p$ and $p-1$ respectively.

The fixed-point subspace $C_{\bar{L}_p}(C_p)$ is trivial. Indeed, in case of a coprime action, the fixed points in the quotient algebra are covered by the fixed points in the original Lie algebra (see for example \cite[Theorem~1.6.2]{khukhro1993nilpotent}). For the same reason, if we require that, for every prime $p$, the characteristic of the underlying field of $\bar{L}_p$ is coprime with $p-1$, then the subspace $[C_{\bar{L}_p}(C_{p-1}),C_{\bar{L}_p}(C_{p-1}) ]$ is trivial too. At the end of this section we will show that this hypothesis is unnecessary. Indeed, the fixed points of $C_{p-1}$ in the quotient algebra are covered by the fixed points in the original Lie algebra even when the characteristic of the field divides $p-1$. Nevertheless, for now just assume that this condition holds, so that the three properties of $\mathfrak{L}$ listed in Proposition \ref{prop} are satisfied and we can continue with our proof.

It only remains to show that for every integer $k$, there exists a Lie algebra in $\mathfrak{L}$ with nilpotency class at least $k$. In other words, this algebra must have a non-trivial Lie bracket of degree $k$. Because the lower bound for $\bar{l}_{p,k}$ has positive leading coefficient, for every fixed $k$ we can always find a prime number $p$ for which $\bar{l}_{p,k}$ is positive. Hence, the corresponding Lie algebra will have nilpotency class at least $k$. Proposition \ref{prop} is now proved modulo Lemma \ref{noCoprim}.
\end{proof}

\begin{proof}[Proof of Theorem \ref{thm1}]Every $\bar{L}_p$ is a metabelian $\left(\mathbb{Z}/p \mathbb{Z} \right)$-graded Lie algebras with $^{0}\bar{L}_p=0$. According to a theorem of Shumyatsky \cite[Theorem 3.3]{1627225420050301}, its nilpotency class is at most $p-1$. We can therefore produce a similar result for groups under the extra condition that, for every algebra $\bar{L}_p$ (and hence $L_p$), the characteristic $q_p$ of the  underlying field is greater than $p$. Indeed, under this hypothesis, the Lazard correspondence based on the `truncated' Baker--Campbell--Hausdorff formula (see, for example, \cite[Section 10.2]{9780511526008}) transforms every $\bar{L}_p$ into a finite $q_p$-group $Q_p$ with the same nilpotency class as $\bar{L}_{p}$. The group $Q_p$ admits the same Frobenius group of automorphisms $C_pC_{p-1}$ with trivial $C_{Q_p}(C_p)$ and abelian $C_{Q_p}(C_{p-1})$. This proves Theorem \ref{thm1}.
\end{proof}

Increasing $q$ with $p$ in our construction was only required for an application of the Lazard correspondence. Hence, it is not unnatural to ask the following
\begin{question}
We wonder if it is possible to construct a family of $q$-groups, of unbounded nilpotency class, where $q$ is a fixed prime number, whose elements $G$ satisfy:
\begin{enumerate}
\item $G$ admits a metacyclic Frobenius group of automorphisms;
\item the centralizer of the Frobenius kernel in $G$ is trivial;
\item the centralizer of the Frobenius complement in $G$ is abelian.
\end{enumerate}
\end{question}

We conclude this section by proving that the fixed points of $C_{p-1}$ in $\bar{L}_p$ are covered by the fixed points of the same group in $L_p$ even when the action is not coprime.
 \begin{lemma}\label{noCoprim}
 For every prime number $p$ let $L_p$, $\bar{L}_p$, $I_p$ and $J_p$ defined as above. Let $T_p$ denote the ideal $I_p+J_p$, so that $\bar{L}_p=L_p/T_p$. Then we have
 \begin{equation*}
 C_{\bar{L}_p} (C_{p-1})=(C_{L_p} (C_{p-1})+ T_p)/ T_p.
 \end{equation*} 
 \end{lemma}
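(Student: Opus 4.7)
The inclusion $(C_{L_p}(C_{p-1})+T_p)/T_p \subseteq C_{\bar{L}_p}(C_{p-1})$ is immediate, since the image of a fixed vector is fixed. The plan is therefore to prove the opposite inclusion using the $C_p$-weight decomposition, which bypasses the need for the action of $C_{p-1}$ to be coprime.

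Recall from Section~\ref{2} that $L_p=\bigoplus_{j=0}^{p-1}{}^{j}\!L_p$ is graded by $C_p$-weight, and that the two generators satisfy $fh=hf^r$, so $h$ maps ${}^{j}\!L_p$ onto ${}^{rj}\!L_p$. Since $T_p=I_p+J_p$ is $C_p$-invariant, Lemma~\ref{lemma} gives $T_p=\bigoplus_{j}({}^{j}\!L_p\cap T_p)$, so the weight decomposition descends to $\bar{L}_p=\bigoplus_j {}^{j}\!\bar{L}_p$. The key observation is that $C_{L_p}(C_p)={}^{0}\!L_p$ is contained in $I_p\subseteq T_p$ by the very definition of $I_p$, so ${}^{0}\!\bar{L}_p=0$.

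Now take any $\bar{z}\in C_{\bar{L}_p}(C_{p-1})$ and decompose $\bar{z}=\sum_{j=0}^{p-1}\bar{z}_j$ with $\bar{z}_j\in {}^{j}\!\bar{L}_p$. Comparing $C_p$-weights in the equation $h(\bar{z})=\bar{z}$ yields $h(\bar{z}_j)=\bar{z}_{rj}$ for every $j$. For $j=0$ this gives $\bar{z}_0\in{}^{0}\!\bar{L}_p=0$, and for $j\neq 0$ the equalities $\bar{z}_{r^i}=h^i(\bar{z}_1)$ determine all non-zero-weight components from $\bar{z}_1$ alone; in particular, writing $\tau=\sum_{i=0}^{p-2}h^i$, we obtain $\bar{z}=\tau(\bar{z}_1)$.

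Finally, choose any lift $z_1\in{}^{1}\!L_p$ of $\bar{z}_1$ and set $w=\tau(z_1)\in L_p$. Since $h\tau=\tau$ (because $h^{p-1}=1$, a purely combinatorial identity valid in any characteristic), we have $w\in C_{L_p}(C_{p-1})$, and by construction the image of $w$ in $\bar{L}_p$ is $\tau(\bar{z}_1)=\bar{z}$. Hence $\bar{z}\in (C_{L_p}(C_{p-1})+T_p)/T_p$, completing the proof. The only non-routine ingredient is the observation that the awkward weight-$0$ contribution is killed by $T_p$; once this is in hand, the non-zero weights are permuted freely by $h$, and "being $h$-fixed'' is forced to coincide with "being a $\tau$-trace'', regardless of whether $\operatorname{char}(K)$ divides $p-1$.
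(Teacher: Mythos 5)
Your proof is correct, and it reaches the conclusion by a more streamlined route than the paper. The paper also reduces to the non-zero $C_p$-weights (its subspace $W'_{p,k}$), but then proceeds by constructing explicit compatible bases: it shows that the generators of $(I_p+J_p)\cap W'_{p,k}$ fall into $h$-orbits of length $p-1$ meeting each weight space ${}^{j}\!L_p$, $j\neq 0$, exactly once, completes a basis of $(I_p+J_p)\cap L_{p,k}\cap{}^{1}\!L_p$ to one of $L_{p,k}\cap{}^{1}\!L_p$, and translates by powers of $h$ to exhibit both $W'_{p,k}$ and the quotient as free permutation modules for $C_{p-1}$, whose fixed points are spanned by orbit sums. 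Your argument extracts the same underlying mechanism --- $h$ permutes the non-zero weight spaces in a single $(p-1)$-cycle, so the non-zero-weight part is a free $KC_{p-1}$-module --- but runs it element-wise with the trace $\tau=\sum_{i=0}^{p-2} h^i$: any $h$-fixed element of the quotient has vanishing weight-$0$ component because $C_{L_p}(C_p)={}^{0}\!L_p\subseteq I_p\subseteq T_p$, its remaining components are forced to be the $h$-translates of the weight-$1$ component, so the element equals $\tau(\bar z_1)$ and lifts to the $h$-fixed element $\tau(z_1)$. This buys you two things: no division by $p-1$ ever occurs, so the non-coprime case is handled transparently; and the only properties of $T_p$ you use are its $C_pC_{p-1}$-invariance and the inclusion ${}^{0}\!L_p\subseteq T_p$, which makes the lemma visibly independent of the finer structure of $I_p$ and $J_p$ (the paper's closing remark suggests a stronger dependence on the specific construction than your argument actually needs). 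One cosmetic slip: from $f^{h^{-1}}=f^r$ one gets $hf=f^rh$, i.e.\ $f^h=f^{r^{-1}}$, not $fh=hf^r$; but the fact you actually use, namely that $h$ carries ${}^{j}\!L_p$ onto ${}^{rj}\!L_p$, is correct and is stated in the paper.
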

\begin{proof} We can restrict ourselves to the degree-homogeneous case and, denoting the subspace $(I_p+J_p) \cap L_{p,k}$ by $T_{p,k}$ , show that
\begin{equation*}
C_{L_{p,k}/T_{p,k}}(C_{p-1})=(C_{L_{p,k}}(C_{p-1})+T_{p,k})/ T_{p,k}.
\end{equation*}

Recall that $W_{p,k}=L_{p,k} \, \cap \,^{0}\!L_p$ and 
$W'_{p,k}=L_{p,k} \, \cap \,^{1}\!L_p \oplus \cdots \oplus L_{p,k} \, \cap \,^{p-1}\!L_p$.
Because $I_p$ and $J_p$ are $C_p$-invariant we have
\begin{equation*}
T_{p,k}= (I_p+J_p) \cap W_{p,k} \oplus (I_p+J_p) \cap W'_{p,k}=W_{p,k}\oplus (I_p+J_p) \cap W'_{p,k},
\end{equation*}
whence
\begin{equation*}
\dfrac{L_{p,k}}{T_{p,k}}
\cong \dfrac{W'_{p,k}}{(I_p+J_p) \cap W'_{p,k}}.
\end{equation*}

 A set of generators for $(I_p+J_p) \cap W'_{p,k}$ is given by the generators of $I_p\cap W'_{p,k}$ together with the generators of $J_p \cap W'_{p,k}$. In other words, since the generators of $(I_p+J_p) \cap L_{p,k}$, described in Propositions \ref{i_pk}  and \ref{Jp generators}, are all $C_p$-homogeneous, we take those of them not belonging to $^{0}\!L_p$.

It is not hard to show that the generators of $I_p\cap W'_{p,k}$ are permuted by $C_{p-1}$ in orbits of length $p-1$, whose elements are all in different $C_p$-homogeneous components (one for each $^{j}\!L_p$, with $1 \le j \le p-1$). The same holds for the generators of $J_p \cap W'_{p,k}$. The key point here is that each simple Lie bracket involved in one of those generators has sum of its indices different from $0$ modulo $p$. We have just shown that
\begin{equation*}
(I_p+J_p) \cap W'_{p,k}=\bigoplus_{j=0}^{p-2} ((I_p+J_p)\cap L_{p,k}\cap \,^{1}\!L_p)^{h^{j}}.
\end{equation*}

Now consider a basis $\left\{w_1,\ldots, w_t \right\}$ for the subspace $(I_p+J_p) \cap L_{p,k} \, \cap \,^{1}\!L_p $. From the above equation it follows that $(I_p+J_p) \cap W'_{p,k}$
has basis 
\begin{equation*}
\left\{w_1, \ldots, w_t, w_1^h, \ldots, w_t^h, \ldots, w_1^{h^{p-2}}, \ldots ,w_t^{h^{p-2}} \right\}.
\end{equation*}
If we complete the basis of $(I_p+J_p)\cap L_{p,k}\cap \,^{1}\!L_p$ to a basis of $L_{p,k} \, \cap \,^{1}\!L_p$,  by adding the vectors $\left\{w_{t+1}, \ldots ,w_s \right\}$, then a basis of $L_{p,k}/(I_p+J_p) \cap L_{p,k}$ is 
\begin{equation*}
\left\{w_{t+1}, \ldots, w_s, w_{t+1}^h, \ldots, w_{s}^h, \ldots, w_{t+1}^{h^{p-2}}, \ldots, w_{s}^{h^{p-2}} \right\},
\end{equation*}
where, with a little abuse of notation, the cosets are named by their representative.
The desired conclusion follows.
\end{proof}
We remark that the result of the previous lemma strongly relies on the definitions of the algebras involved. Indeed, a crucial step in the proof is the fact that both $L_p$ and $\bar{L}_p$ are free $H$-modules.

\bibliography{bibliography}

\providecommand{\bysame}{\leavevmode\hbox to3em{\hrulefill}\thinspace}
\providecommand{\MR}{\relax\ifhmode\unskip\space\fi MR }
\providecommand{\MRhref}[2]{%
  \href{http://www.ams.org/mathscinet-getitem?mr=#1}{#2}
}
\providecommand{\href}[2]{#2}
\begin{thebibliography}{{Kre}63}

\bibitem[Bak87]{trove.nla.gov.au/work/12633643}
YU.~A. Bakhturin, \emph{Identical relations in {L}ie algebras}, VNU Science
  Press, Utrecht, 1987.

\bibitem[BH96]{Belyaev1996}
V.~V. Belyaev and B.~Hartley, \emph{Centralizers of finite nilpotent subgroups
  in locally finite groups}, Algebra and Logic \textbf{35} (1996), 217--228.

\bibitem[Hig57]{Higman01071957}
G.~Higman, \emph{Groups and rings having automorphisms without non-trivial
  fixed elements}, J. London Math. Soc. \textbf{32} (1957), 321--334.

\bibitem[Khu93]{khukhro1993nilpotent}
E.I. Khukhro, \emph{Nilpotent groups and their automorphisms}, Walter de
  Gruyter, Berlin, 1993.

\bibitem[Khu98]{9780511526008}
E.~I. Khukhro, \emph{p-automorphisms of finite p-groups}, Cambridge University
  Press, Cambridge, 1998.

\bibitem[Khu12]{KHUKHRO20121}
E.I. Khukhro, \emph{Fitting height of a finite group with a {F}robenius group
  of automorphisms}, J. Algebra \textbf{366} (2012), 1--11.

\bibitem[KK63]{Kreknin-Kostrikin}
V.A. {Kreknin} and A.I. {Kostrikin}, \emph{Lie algebras with regular
  automorphisms}, Math. USSR Doklady \textbf{4} (1963), 355--358.

\bibitem[KMS14]{khukhro2014frobenius}
E.I. Khukhro, N.Y. Makarenko, and P.~Shumyatsky, \emph{Frobenius groups of
  automorphisms and their fixed points}, Forum Math. \textbf{26} (2014),
  73--112.

\bibitem[{Kre}63]{Kreknin}
V.A. {Kreknin}, \emph{The solubility of {L}ie algebras with regular
  automorphisms of finite period}, Math. USSR Doklady \textbf{4} (1963),
  683--685.

\bibitem[KS11]{Khukhro2011}
E.~I. Khukhro and P.~Shumyatsky, \emph{Nilpotency of finite groups with
  {F}robenius groups of automorphisms}, Monatsh. Math. \textbf{163} (2011),
  no.~4, 461--470.

\bibitem[MK14]{mazurov2014unsolved}
V.~D. Mazurov and E.~I. Khukhro, \emph{Unsolved {P}roblems in {G}roup {T}heory.
  {T}he {K}ourovka {N}otebook. no. 18}, Institute of Mathematics, Novosibirsk,
  2014.

\bibitem[MS10]{makarenko2010frobenius}
N.~Makarenko and P.~Shumyatsky, \emph{Frobenius groups as groups of
  automorphisms}, Proc. Amer. Math. Soc. \textbf{138} (2010), 3425--3436.

\bibitem[Shu05]{1627225420050301}
P.~Shumyatsky, \emph{On the nilpotency class of {L}ie rings with
  fixed-point-free automorphisms}, Turk. J. Math. \textbf{29} (2005), 65 -- 74.

\bibitem[Shu11]{SHUMYATSKY2011482}
P.~Shumyatsky, \emph{On the exponent of a finite group with an automorphism
  group of order twelve}, J. Algebra \textbf{331} (2011), 482 -- 489.

\bibitem[Tho59]{thompson1959finite}
J.~G. Thompson, \emph{Finite groups with fixed-point-free automorphisms of
  prime order}, Proc. Nat. Acad. Sci. USA \textbf{45} (1959), 578--581.

\end{thebibliography}

\end{document}